\newcommand{\href}[1]{#1} %
\newtheorem{definition}{Definition}
\newtheorem{lemma}{Lemma}
\newtheorem{theorem}{Theorem}
\newtheorem{prop}{Proposition}
\newtheorem{corollary}{Corollary}
\let\origdoublepage\cleardoublepage
\newcommand{\clearemptydoublepage}{%
  \clearpage{\pagestyle{empty}\origdoublepage}}
\let\cleardoublepage\clearemptydoublepage
\DeclareMathOperator{\bN}{\mathbb{N}}
\newcommand{\bZ}{{\mathbb Z}}
\newcommand{\bC}{{\mathbb C}}
\newcommand{\bA}{{\mathbb A}}
\newcommand{\bQ}{{\mathbb Q}}
\begin{document}

\pagestyle{empty}
\pagenumbering{roman}

\begin{titlepage}
        \begin{center}
        \vspace*{1.0cm}

        \Huge
        {\bf  A Look at Chowla's Problem}

        \vspace*{1.0cm}

        \Large
        Andean Medjedovic \\

	\vspace{10mm}

	\normalsize
\begin{center}\textbf{Abstract}\end{center}

In this paper we look at the history behind Chowla's problem on the solutions to $L(1,f) = 0$ for periodic $f$. We focus on the results given by Baker, Birch and Wirsing on the topic. We briefly discuss recent results due to Chatterjee, Murty and Pathak which give a full solution
when combined with the work of Baker, Birch and Wirsing.

        \end{center}
\end{titlepage}

\pagestyle{plain}
\setcounter{page}{2}

\cleardoublepage

\renewcommand\contentsname{Table of Contents}
\tableofcontents
\cleardoublepage
\phantomsection    %

\pagenumbering{arabic}

\section{Dirichlet's Theorem}
In 1831, Peter Gustav Lejeune Dirichlet introduced his namesake character in order to prove that the arithmetic progression $an+b$,
for natural numbers $a, b$ and $n$ with $\gcd (a,b) = 1$, takes on prime values infinitely often. The associated $L$-series of the characters were
instrumental in his study. During his exploration, Dirichlet managed to demonstrate the value of such $L$-series at $1$ is non-zero. This result later proved to be crucial
to Chowla's investigation. We begin with a quick review of the relevant details.

\begin{definition}
	We say a function $\chi:\bZ \to \bC$ is a Dirichlet character modulo $k$ if:
	    \begin{itemize}
				\item $\chi$ is completely multiplicative, $\chi(nm) = \chi(n)\chi(m)$ for any $n,m \in \bN$
				\item $\chi$ is $k$-periodic, $\chi(n+k) = \chi(n)$ for any $n\in \bN$
				\item $\chi(n) \ne 0$ for $(n,k) = 1$ and $\chi(n) = 0$ for $(n,k) \ne 1$
		\end{itemize}
\end{definition}
	Furthermore, a character is principal if $\chi(n) \in \{0,1\}$ for all $n \in \bN$.
	Notice that if $l$ is a multiple of $k$, a character on $k$ is also a character on $l$. We say a character (modulo $l$) is primitive if
	it is not a character of modulo $k$ for $k < l$. We use $X(k)$ to denote the set of Dirichlet characters modulo $k$.

\begin{prop}
The Dirichlet characters satisfy an orthogonality relation. If $\chi$ is a non-principal character modulo $k$ then
	$$\sum_{a \in \bZ/k\bZ}\chi(a) = 0$$
	$$\sum_{\chi \in X(k)} \chi(a) =0$$
Where $a\ne 1$ in the second sum. If $a = 1$ then the second sum is $\phi(a)$.
\end{prop}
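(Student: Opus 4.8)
The plan is to reduce both identities to a single averaging trick on the finite abelian group $G = (\bZ/k\bZ)^{\times}$: a sum $S$ taken over a group (or over its character group) that satisfies $\zeta S = S$ for some value $\zeta \neq 1$ must be $0$. The point is that the nonzero values of a character modulo $k$ are supported on $G$, on which $\chi$ restricts to a homomorphism into $\bC^{\times}$, and that these homomorphisms themselves form a group $\widehat{G}$ under pointwise multiplication; I will use the identification $X(k) = \widehat{G}$ and the fact that $|\widehat{G}| = |G| = \phi(k)$.

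For the first identity, since $\chi$ is non-principal there is an integer $b$ with $(b,k) = 1$ and $\chi(b) \neq 1$. Multiplication by $b$ permutes $\bZ/k\bZ$ and preserves coprimality to $k$, so $\chi(b)\sum_{a}\chi(a) = \sum_{a}\chi(ab) = \sum_{a}\chi(a)$, whence $(\chi(b)-1)\sum_{a}\chi(a) = 0$ and the sum vanishes. For the second identity I fix $a$. If $(a,k)\neq 1$ every term is $0$. If $a \equiv 1 \pmod{k}$ every term equals $1$, so the sum is $|X(k)| = \phi(k)$. If $(a,k)=1$ but $a \not\equiv 1$, I produce a character $\psi \in X(k)$ with $\psi(a)\neq 1$; then $\chi \mapsto \psi\chi$ permutes $X(k)$, giving $\psi(a)\sum_{\chi}\chi(a) = \sum_{\chi}(\psi\chi)(a) = \sum_{\chi}\chi(a)$, so the sum is $0$ by the same cancellation.

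The only non-formal ingredient — the crux — is the pair of structural facts about $\widehat{G}$: that $|\widehat{G}| = |G| = \phi(k)$, and that $\widehat{G}$ separates points, i.e. for every $a \neq 1$ in $G$ there is $\psi$ with $\psi(a)\neq 1$. I would deduce both from the structure theorem for finite abelian groups. Writing $G \cong \bZ/n_1\bZ \times \cdots \times \bZ/n_r\bZ$, the characters of a cyclic group $\bZ/n\bZ$ are precisely the maps $x \mapsto e^{2\pi i \ell x / n}$ for $0 \le \ell < n$, so there are exactly $n$ of them and the one with $\ell = 1$ is nontrivial on every nonzero element. Taking products over the factors yields $|\widehat{G}| = n_1 \cdots n_r = |G|$, and for a nontrivial element $a = (a_1,\dots,a_r)$ with, say, $a_i \neq 0$, the character that is the $\ell=1$ character on the $i$-th factor and trivial on the others does not kill $a$. (This also corrects the evident typo in the statement: the value at $a \equiv 1$ is $\phi(k)$, not $\phi(1)$.)
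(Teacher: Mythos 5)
The paper states this proposition without giving any proof, so there is nothing to compare your argument against; judged on its own, your proof is correct and is the standard one. Both halves rest on the same translation trick (if $\zeta S = S$ with $\zeta \neq 1$ then $S=0$), applied once to the group $(\bZ/k\bZ)^{\times}$ and once to its character group, and you correctly isolate the only non-formal ingredient: the existence, for each $a \not\equiv 1$ with $(a,k)=1$, of a character $\psi$ with $\psi(a) \neq 1$, together with $|X(k)| = \phi(k)$. Your derivation of these from the structure theorem for finite abelian groups (characters of a cyclic group are the $x \mapsto e^{2\pi i \ell x/n}$, and characters of a product are products of characters of the factors) is sound, and you are also right that the value of the second sum at $a \equiv 1$ should read $\phi(k)$, not $\phi(a)$. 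One could shorten the crux by noting that for $a$ of order $m > 1$ in $(\bZ/k\bZ)^{\times}$ any character of the cyclic subgroup $\langle a \rangle$ sending $a$ to a primitive $m$-th root of unity extends to the whole group, but the route through the structure theorem is equally valid.
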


\begin{definition}
	Given a character we define the $L$-series of that character to be the analytic continuation of the sum:
		$$ L(s,\chi) = \sum_{n=1}^{\infty} \frac{\chi(n)}{n} $$
	where $s \in \bC$. One can check as an exercise the any non-principal $L$-series is holomorphic for real part of $1> s > 0$.
\end{definition}

We now prove the aforementioned theorem of Dirichlet\cite{dirichletthm}.
\begin{theorem}
	If $\chi$ is a non-principal character then:
	$$L(1,\chi) \ne 0$$
\end{theorem}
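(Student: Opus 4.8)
The plan is to prove that $L(1,\chi) \neq 0$ for non-principal $\chi$ modulo $k$ by bundling all the $L$-functions modulo $k$ together into a single Dedekind-style zeta function and exploiting a pole-counting argument. Concretely, I would consider the product
$$ \zeta_k(s) = \prod_{\chi \in X(k)} L(s,\chi), $$
valid a priori for $\mathrm{Re}(s) > 1$ via the Euler product expansion $L(s,\chi) = \prod_{p \nmid k}(1 - \chi(p)p^{-s})^{-1}$. Expanding this product over characters and using the orthogonality relation from the Proposition above, one checks that $\zeta_k(s)$ has nonnegative Dirichlet coefficients: for each prime $p \nmid k$, the local factor $\prod_{\chi}(1-\chi(p)p^{-s})^{-1}$ equals $(1 - p^{-f_p s})^{-\phi(k)/f_p}$ where $f_p$ is the multiplicative order of $p$ modulo $k$, which when expanded has only nonnegative coefficients. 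Hence $\zeta_k(s) = \sum_{n} a_n n^{-s}$ with $a_n \geq 0$ and $a_1 = 1$.

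Next I would bring in the analytic input: the principal character $\chi_0$ contributes a factor $L(s,\chi_0)$ which (being $\zeta(s)$ times finitely many Euler factors at primes dividing $k$) extends meromorphically to $\mathrm{Re}(s) > 0$ with a simple pole at $s = 1$ and no other poles there, while by the Definition's exercise each non-principal $L(s,\chi)$ is holomorphic on $\mathrm{Re}(s) > 0$. Therefore $\zeta_k(s)$ is meromorphic on $\mathrm{Re}(s) > 0$, and if some non-principal $L(1,\chi)$ were to vanish, that zero would cancel the simple pole of $L(s,\chi_0)$ at $s = 1$, making $\zeta_k(s)$ holomorphic on all of $\mathrm{Re}(s) > 0$. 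The strategy is then to derive a contradiction from the combination ``$\zeta_k$ is holomorphic on $\mathrm{Re}(s) > 0$'' together with ``$\zeta_k$ has nonnegative Dirichlet coefficients and $a_1 = 1$.''

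The contradiction comes from a Landau-type argument on the abscissa of convergence. Since $\zeta_k(s)$ has nonnegative coefficients, by Landau's theorem its Dirichlet series either converges for all $s$ with $\mathrm{Re}(s) > 0$ or has a singularity at its real abscissa of convergence $\sigma_0 \geq 0$; holomorphy on $\mathrm{Re}(s) > 0$ forces the series $\sum a_n n^{-s}$ to converge throughout that half-plane. But one can see the series diverges at, say, $s = 1/\phi(k)$: the subseries over $n = p^{f_p m}$ contributing through the local factors already behaves like $\sum_p p^{-\phi(k) s f_p / \phi(k)} \geq \sum_p p^{-1}$ type growth — more carefully, the coefficient of $p^{-f_p s}$ in each local factor is $\phi(k)/f_p \geq 1$, and at $s = 1/\phi(k)$ these terms do not go to zero fast enough for convergence (a standard estimate shows $\zeta_k(s) \to \infty$ as $s \to 1/\phi(k)^+$ along the reals, since $\zeta_k(s)$ dominates $\zeta(\phi(k)s)$ which diverges at $s=1/\phi(k)$). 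This contradicts convergence on the half-plane, so no non-principal $L(1,\chi)$ can vanish.

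The main obstacle — and the only genuinely delicate point — is the case of a \emph{real} non-principal character $\chi$, i.e. $\chi = \bar\chi$. For complex $\chi$, the pair $\chi, \bar\chi$ are distinct characters in $X(k)$, so a zero of $L(1,\chi)$ is matched by a zero of $L(1,\bar\chi)$ and the simple pole of $L(s,\chi_0)$ is cancelled by a \emph{double} zero, making the contradiction easy; but if $\chi$ is real the pole is cancelled by only a single zero and the naive count still leaves $\zeta_k$ holomorphic, so one must be careful that the divergence estimate is strong enough to handle this. I expect to address the real case either by the same Landau argument (which does not care about the order of vanishing, only that the product is everywhere holomorphic and has nonnegative coefficients) or, alternatively, by the classical hyperbola-summation estimate applied to $\sum_{n \leq x}\sum_{d \mid n}\chi(d)$, showing this partial sum is both $\gg \sqrt{x}$ (from the main term if $L(1,\chi) \neq 0$ fails in a way forcing a contradiction) and $O(1)$ — the contradiction there pins down $L(1,\chi) \neq 0$ directly. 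I would present the Landau/nonnegative-coefficient route as the clean unified proof and remark on the real-character subtlety.
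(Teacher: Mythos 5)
Your proposal is correct and follows essentially the same route as the paper: form $\zeta_k(s)=\prod_{\chi\in X(k)}L(s,\chi)$, observe via the Euler product $(1-p^{-f_ps})^{-\phi(k)/f_p}$ that its Dirichlet coefficients are nonnegative, apply Landau's theorem to locate a singularity at the real abscissa, and derive a contradiction from divergence near $s=1/\phi(k)$ if the simple pole of the principal factor were cancelled by a zero of some non-principal $L(1,\chi)$. Your added remark that the Landau argument handles real characters uniformly (since it is insensitive to the order of vanishing) is accurate and is implicitly how the paper's argument works as well.
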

\begin{proof}
	Let $X(k)$ be the set of all Dirichlet characters of modulus $k$ and consider the product
	$$\zeta_k(s) = \prod_{\chi \in X(k)} L(s,\chi)$$
	as $s \to 1^+$. We first show that $\zeta_k(s)$ has a singularity at $s = 1$.
	Re-writing the above as a Euler product we see that,

	$$\zeta_k(s) = \prod_{p\nmid k}\frac{1}{(1- \frac{1}{p^{|p|s}})^{\frac{\phi(k)}{|p|}}}$$
	where $|p|$ is the order of $p$ in $\bZ/k\bZ$. We now invoke a theorem of Landau stating that if the coefficients of a $L$-series are at least zero, the real point
	on the abscissa is a singularity of the $L$-series. Suppose $\zeta_k(s)$ does not have a singularity at $s=1$. Since the $L$-series of each character is holomorphic at $s=1$,
	the abscissa must be at some $\sigma \leq 0$. But this is ridiculous, as we can see that for  $s =\frac{1}{ \phi(k)}$ the product is unbounded, we have

	$$\zeta_k(s) = \prod_{p\nmid k}\frac{1}{(1- \frac{1}{p^{|p|s}})^{\frac{\phi(k)}{|p|}}} \geq \prod_{p\nmid k} \sum_{i= 0}^{\infty} \frac{1}{p^{\phi(k)si}}
	\geq \sum_{p \nmid k} \frac{1}{p}$$

	And Euler showed that the sum of the reciprocal of primes diverges.

	So $\zeta_k(s)$ indeed has a singularity. If $\chi$ is a principal character then $L(\chi,s)$ has a simple pole at $s=1$, If $L(\chi,s) = 0$ for any other (non-principal) character
	then it follows that $\zeta_k(s)$ would not have a pole at $s=1$. A contradiction, thus $L(\chi,s) \ne 0$.
\end{proof}
\begin{definition}{Cyclotomic Polynomials}

	We say the minimal polynomial over $\bQ$ of any primitive root of $n$, say $e^{2\pi i /n}$, is the $n^{th}$ cyclotomic polynomial, $\Phi_n$.
	One can check that it is also possible to define it with the following:
	$$\Phi_n(x) = \prod_{\substack{1 \leq k \leq n \\ (k,n) = 1}} (x - e^{2\pi i k/n})$$
\end{definition}

\section{Chowla's Question}
Sarvadaman Chowla asked the following natural generalization of Dirichlet's theorem to Carl Ludwig Siegel in 1949\cite{chowla70}. Let $f$ be a non-zero $q$-periodic function from $\bZ$ to $\bQ$ that
takes $0$ for values not relatively prime to $q$ (that is $f(p) = 0$ if $(p,q) > 1$). Is it possible that
$$L(1,f) = \sum_{n=1}^{\infty}\frac{f(n)}{n} = 0?$$

\begin{lemma}{\label{0sum}}
	If such an $f$ exists we must have $\sum_{n =1}^{q}f(n) = 0$.
\end{lemma}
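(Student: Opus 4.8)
The plan is to isolate, inside the partial sums of $\sum_{n\ge1}f(n)/n$, a single harmonic‑type contribution whose coefficient is exactly the mean value $\frac1q\sum_{a=1}^q f(a)$. Since $f$ is $q$‑periodic with rational values it takes only finitely many values, hence is bounded, say $|f(n)|\le M$ for all $n$. The finiteness of $L(1,f)$ will then be incompatible with a nonzero mean value, forcing $\sum_{a=1}^q f(a)=0$.

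First I would reduce to partial sums along multiples of $q$. Set $S_N=\sum_{n=1}^{Nq}\frac{f(n)}{n}$. For $0\le r<q$ the extra block $\sum_{Nq<n\le Nq+r}\frac{f(n)}{n}$ is bounded by $qM/(Nq)=M/N\to0$, so $L(1,f)=\lim_{N\to\infty}S_N$. Next, using $q$‑periodicity of $f$, I would regroup
$$S_N=\sum_{m=0}^{N-1}\sum_{a=1}^{q}\frac{f(a)}{mq+a},$$
and compare each term with the ``aligned'' term $\frac{1}{(m+1)q}$ via the exact identity
$$\frac{1}{mq+a}=\frac{1}{(m+1)q}+\frac{q-a}{(mq+a)(m+1)q},$$
whose last summand is $O(1/m^2)$ uniformly for $1\le a\le q$ (with the term $m=0$ merely contributing the bounded quantity $\tfrac{q-a}{aq}$).

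Substituting, $S_N$ splits as a main term $\bigl(\sum_{a=1}^q f(a)\bigr)\cdot\frac1q H_N$, where $H_N=\sum_{m=1}^N\frac1m$, plus the remainder $\sum_{m=0}^{N-1}\sum_{a=1}^q \frac{f(a)(q-a)}{(mq+a)(m+1)q}$, which converges absolutely as $N\to\infty$ by comparison with a constant multiple of $\sum_m 1/m^2$. Since $S_N$ converges (to $0$) and the remainder converges, the main term converges; but $H_N\to\infty$, so this is possible only if $\sum_{a=1}^q f(a)=0$, which is the assertion of the lemma.

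The only real care needed is bookkeeping: making the $O(1/m^2)$ bound uniform in $a$ so the remainder is genuinely dominated by an absolutely convergent series, and noting that the $m=0$ term is harmless. I would also remark that the same conclusion follows more conceptually by writing $L(s,f)=q^{-s}\sum_{a=1}^q f(a)\,\zeta(s,a/q)$ with the Hurwitz zeta function $\zeta(s,x)=\sum_{m\ge0}(m+x)^{-s}$; since $\zeta(s,x)$ has a simple pole at $s=1$ with residue $1$ independent of $x$, the residue of $L(s,f)$ at $s=1$ is $\frac1q\sum_{a=1}^q f(a)$, and the existence of a finite value $L(1,f)$ annihilates it. In either form, the substance is the recognition that the mean value of $f$ is precisely the coefficient of harmonic divergence; the rest is routine estimation.
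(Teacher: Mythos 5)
Your proof is correct, and it reaches the same essential insight as the paper — that the mean value $\frac1q\sum_{a=1}^q f(a)$ is exactly the coefficient of the harmonic (logarithmic) divergence in the partial sums — but by a different decomposition. The paper argues by contradiction via Abel summation: assuming $\sum_{n=1}^q f(n)=p>0$, it notes that the partial sums $\sum_{n\le t}f(n)$ grow like $\tfrac{p}{q}t+O(1)$ and feeds this into the partial-summation integral $\int_1^{qy}\bigl(\sum_{n\le t}f(n)\bigr)t^{-2}\,dt$, which then diverges like $\tfrac{p}{q}\log(qy)$. You instead regroup the series into blocks of length $q$ and compare each term $\frac{1}{mq+a}$ with the aligned term $\frac{1}{(m+1)q}$ via an exact telescoping identity, splitting $S_N$ into $\bigl(\sum_a f(a)\bigr)\tfrac1q H_N$ plus an absolutely convergent remainder dominated by $\sum_m m^{-2}$; your uniformity check in $a$ and your handling of the $m=0$ block are both sound. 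The two routes are of comparable length; yours has the minor advantage of being entirely explicit (no appeal to summation by parts, and the error term is an absolutely convergent series rather than an $O(1)$ inside an integral), while the paper's is slightly quicker to state. Your closing remark via the Hurwitz zeta function $\zeta(s,a/q)$ and residues at $s=1$ is a clean conceptual restatement, though it presupposes analytic continuation machinery the elementary argument avoids.
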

\begin{proof}
	Suppose, without loss of generality that $\sum_{n =1}^{q}f(n) = p > 0$. Now consider the sum we are interested up to some large $qy$, $y\in \bN$ and sum by parts,
	$$\sum^{qy}\frac{f(n)}{n} = \sum^{qy}f(n) + \int_1^{qy}\frac{\left(\sum^{t}f(n)\right)}{t^2} dt = py + \int_1^{qy} \frac{\frac{t}{q}p +O(1)}{t^2}dt = py + \ln(py) +O(1)$$
	So as $y$ increases the sum is unbounded.
\end{proof}

 Chowla was able to prove his conjecture in the case that $f$ was an odd periodic function with values in $\{-1,1,0\}$ and with prime period, $q$, with $\frac{1}{2}(q-1)$ also prime.
 In a letter to Chowla, Siegel extended the result to include all rational-valued odd periodic functions with prime period, $q$.

\section{Baker, Birch, and Wirsing}

\begin{theorem}[Baker, Birch, and Wirsing, 1973\cite{baker73}]{\label{thm1}}

	Suppose $f : \bZ \to \bA$ is non-zero, (where $\bA$ are the algebraic numbers) with period $q$ so that:
	\begin{enumerate}[(i)]
			\item $f(r) = 0$ if $1 < (r,q) < q$
			\item $\Phi_q$ is irreducible over $\bQ(f(1), \cdots, f(q))$
	\end{enumerate}
	Then
	\begin{equation}{\label{1}}
	L(1,f) = \sum_{n=1}^{\infty} \frac{f(n)}{n} = 0.
	\end{equation}
	cannot hold.
\end{theorem}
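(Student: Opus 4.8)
The plan is to convert the putative identity \eqref{1} into a vanishing linear form in logarithms of algebraic numbers, apply Baker's theorem on linear forms in logarithms to force a \emph{rational} linear dependence among those logarithms, and then use the two hypotheses to rule out such a dependence unless $f$ vanishes identically. First I would observe that if \eqref{1} held then Lemma \ref{0sum} gives $\sum_{n=1}^{q}f(n)=0$. Set $\zeta=e^{2\pi i/q}$ and expand $f$ by the finite Fourier transform on $\bZ/q\bZ$: $f(n)=\sum_{b=0}^{q-1}\widehat f(b)\zeta^{bn}$ with $\widehat f(b)=\frac1q\sum_{a=1}^{q}f(a)\zeta^{-ab}\in\bA$, where $\widehat f(0)=\frac1q\sum_a f(a)=0$. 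Since $\sum_{n=1}^{N}\zeta^{bn}/n\to-\log(1-\zeta^{b})$ (principal branch) for each $b\not\equiv 0$, and the $b\equiv 0$ term is absent, letting $N\to\infty$ in the finite identity $\sum_{n\le N}f(n)/n=\sum_{b=1}^{q-1}\widehat f(b)\sum_{n\le N}\zeta^{bn}/n$ gives
\[ L(1,f)=-\sum_{b=1}^{q-1}\widehat f(b)\,\log(1-\zeta^{b}). \]
Thus \eqref{1} would say that the nonzero vector $\mathbf v=(\widehat f(1),\dots,\widehat f(q-1))$ of algebraic numbers is a relation vector, with algebraic coefficients, among the logarithms of the nonzero algebraic numbers $1-\zeta^{b}$, $1\le b\le q-1$.

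Next I would invoke Baker's theorem in the shape: logarithms of algebraic numbers that are linearly independent over $\bQ$ remain linearly independent over $\bA$; a short induction then shows that the $\bA$-linear relations among $\log(1-\zeta),\dots,\log(1-\zeta^{q-1})$ are exactly $R\otimes_{\bQ}\bA$, where $R\subseteq\bQ^{q-1}$ is the lattice of \emph{rational} relations, i.e. (after clearing denominators) the integer vectors $(c_b)$ with $\prod_{b}(1-\zeta^{b})^{c_b}=1$. Hence $\mathbf v\in R\otimes_{\bQ}\bA$. Now hypothesis (ii) enters: it says precisely that, with $K=\bQ(f(1),\dots,f(q))$, the restriction map $\Gal(\bA/K)\to(\bZ/q\bZ)^{\times}$ given by the action on $\zeta$ is surjective. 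For $\sigma$ with image $t$ one has $\sigma\widehat f(b)=\widehat f(bt)$, so $\sigma$ acts on $\mathbf v$ by permuting coordinates via $b\mapsto bt$. Since $R$ is defined over $\bQ$, the subspace $R\otimes\bA$ is stable under $\Gal(\bA/\bQ)$, so every $(\bZ/q\bZ)^{\times}$-permutation of $\mathbf v$ again lies in it, and therefore $\mathbf v$ lies in the largest $(\bZ/q\bZ)^{\times}$-stable subspace $W$ of $R\otimes\bA$.

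It then remains to show $W$, intersected with the constraint imposed by (i), is $\{0\}$. When $q$ is prime this closes immediately: $\bA^{q-1}$ is the regular representation of $(\bZ/q\bZ)^{\times}$, its stable subspaces are sums of the character lines $\bA\cdot(\chi(b))_{b}$, and no such line lies in $R\otimes\bA$, because $\sum_b\chi(b)\log(1-\zeta^{b})=-\tau(\chi)L(1,\bar\chi)\ne 0$ for $\chi$ nonprincipal (Dirichlet's theorem, proved above) while $\sum_b\log(1-\zeta^{b})=\log\Phi_q(1)=\log q\ne 0$; hence $W=0$, so $\mathbf v=0$ and $f\equiv 0$, contradicting the hypothesis. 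For general $q$ the space $W$ need not vanish — it contains, for instance, differences of indicator vectors of the strata $\{(b,q)=d\}$, since $\sum_{(b,q)=d}\log(1-\zeta^{b})=\log\Phi_{q/d}(1)$ — and here hypothesis (i) is essential: it confines $\mathbf v$ to the $\phi(q)$-dimensional image of the functions supported on $\{(a,q)=1\}\cup q\bZ$ with vanishing sum, and one must check that this image meets $W$ only in $\{0\}$. That verification is the main obstacle. It rests on an explicit description of $R$ obtained from the arithmetic of the cyclotomic numbers $1-\zeta^{b}$ — their prime factorisations in $\bZ[\zeta]$, the identities $|1-\zeta^{b}|=2\sin\frac{\pi b}{q}$ and $\arg(1-\zeta^{b})=\frac{\pi b}{q}-\frac{\pi}{2}$, and the multiplicative relations among cyclotomic units — and then checking that none of the surviving relations is compatible both with the full $(\bZ/q\bZ)^{\times}$-symmetry and with the support condition (i). The reduction in the first paragraph and the Galois bookkeeping are routine, and Baker's theorem is used as a black box, so it is this arithmetic-Galois step where the real work lies.
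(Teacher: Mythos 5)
Your first two steps reproduce the paper's strategy: the identity $L(1,f)=-\sum_{b=1}^{q-1}\widehat f(b)\log(1-\zeta^b)$ is the paper's Lemma \ref{log}, the use of Baker's theorem to conclude that all $\bA$-linear relations among the $\log(1-\zeta^b)$ come from rational ones is the content of Lemma \ref{aut}, and the observation that condition (ii) makes $\Gal(\bA/K)\to(\bZ/q\bZ)^\times$ surjective, so that $\widehat f(b)\mapsto\widehat f(bt)$ preserves the relation, is Lemma \ref{hn} together with the opening of the main proof. Up to that point the argument is sound. The gap is the final step, which you explicitly defer: for composite $q$ you propose to finish by computing the full lattice $R$ of multiplicative relations among the $1-\zeta^b$ (via their factorisations in $\bZ[\zeta]$ and the theory of cyclotomic units) and intersecting with the support constraint from (i). You do not carry this out, and it is genuinely the hard part --- without it the theorem is not proved, and that route is substantially more demanding than what is actually needed.

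The paper closes the argument without any explicit description of $R$. From the Galois step one knows $L(1,f(h\cdot))=0$ for every $h$ coprime to $q$. Summing over such $h$ and using (i), the inner sum $\sum_{(h,q)=1}f(hn)$ equals $0$ when $1<(n,q)<q$, equals $\phi(q)f(0)$ when $q\mid n$, and equals $-f(0)$ when $(n,q)=1$ (by Lemma \ref{0sum}); grouping by periods expresses $0$ as $f(0)$ times a sum of strictly negative terms, forcing $f(0)=0$. Then for each non-principal $\chi$ modulo $q$ one forms $b(n)=\sum_h\chi(h)f(hn)=b(1)\overline{\chi(n)}$, so that $b(1)L(1,\overline\chi)=\sum_h\chi(h)L(1,f(h\cdot))=0$; Dirichlet's non-vanishing theorem gives $b(1)=\sum_h\chi(h)f(h)=0$, and orthogonality of characters then yields $f\equiv 0$. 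Note this is exactly the character-line computation you used in your prime-$q$ special case --- condition (i) plus the vanishing of $f(0)$ is what lets it go through for general $q$, because $f$ is then supported on residues coprime to $q$ and the character sum argument applies verbatim. So the missing idea is not an analysis of cyclotomic unit relations but the $h$-average to kill $f(0)$ followed by the character twist.
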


Baker, Birch and Wirsing published this paper in $1973$, a bit more than $20$ years after Chowla posed the problem. While the authors were working on the problem, Chowla expanded on his previous
solution, proving there is no $f$ so that $\ref{1}$ holds even if we allow $f$ to be even so long as we force $f(0) = 0$. As one can see for themselves, the theorem presented by
Baker et al. is more general still.

We mention the necessity of both conditions $(i)$ and $(ii)$. Suppose we omitted condition $(i)$, and consider the function, $f$ defined by

$$L(s,f) = (1-p^{1-s})^2\zeta(s)$$
where $\zeta(s)$ is, of course, the Riemann zeta funcation. As $s$ goes to $1$, $(1-p^{1-s})^2\zeta(s)$ goes to $0$ since $\zeta(s)$ has a only a simple pole.
It remains to show that $f(n)$ is periodic. I claim the period is $p^2$. Indeed expand each term of the RHS:
$$(1-p^{1-s})^2\frac{1}{n^s} = \frac{1}{n^s} - \frac{2p}{(np)^s} + \frac{p^2}{(np^2)^s}$$
If we define $(a|b)$ to be $1$ when $a|b$ and $0$ otherwise, for integers $a,b$ then we can write out $f(n)$ explicitly like so:
$$f(n) = \frac{1 - 2p(p|n) +p^2(p^2|n)}{n^2}.$$
So $f(n+p^2) = f(n)$ and so condition $(i)$ is necessary.

Condition $(ii)$ is also necessary, it is possible for $L(1,f)$ to be algebraically dependent for certain $f$. One such example is for the $2$ dirichlet characters modulo $12$ with values given
below.

\begin{center}

\begin{tabular}{c c c c c }
\hline
$n$ & $1$ & $5$ & $7$ & $11$ \\
\hline
$\chi_1(n)$ & $1$ & $-1$ & $1$ & $-1$ \\
$\chi_2(n)$ & $1$ & $-1$ & $-1$ & $1$ \\
\hline
\end{tabular}\\
\end{center}
Then $L(1,\chi_1) = \frac{\pi}{3}, L(1,\chi_2) = \frac{\pi}{2\sqrt{3}}$ and so $2\chi_2 -
\sqrt{3}\chi_1 = f$ implies that $L(1,f) = 0$.
The $12^{th}$ cyclotomic is $x^4 - x^2 +1$ which is reducible over $\bQ(\sqrt{3})$, so condition $(ii)$ is necessary.

Immediately from the theorem we get a non-trivial corollary:
\begin{corollary}
	Suppose $(q,\phi(q)) = 1$ and let $X'(q)$ be the set of non-principal characters modulo $q$. Then $L(1,\chi)$ are linearly independent over $\bQ$ for $\chi\in X'(k)$.
\end{corollary}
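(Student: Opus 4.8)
The plan is to deduce this from Theorem~\ref{thm1} by contradiction. Suppose there were rationals $c_\chi$, not all zero, with $\sum_{\chi \in X'(q)} c_\chi L(1,\chi) = 0$, and set $f = \sum_{\chi \in X'(q)} c_\chi \chi$. First I would check that $f$ meets the hypotheses of the theorem. It maps $\bZ$ into $\bA$, since each $\chi(n)$ is a root of unity or $0$ and each $c_\chi \in \bQ$; it has period $q$ because every $\chi \in X(q)$ is $q$-periodic; and it is non-zero, because distinct Dirichlet characters modulo $q$ are linearly independent over $\bC$ (a consequence of the orthogonality relations). Moreover $L(1,f) = \sum_{\chi} c_\chi L(1,\chi) = 0$. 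Condition (i) is immediate: if $(r,q) > 1$ then $\chi(r) = 0$ for every $\chi$, so $f(r) = 0$; in particular $f(r) = 0$ whenever $1 < (r,q) < q$.

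The heart of the matter is condition (ii), which is where the hypothesis $(q,\phi(q)) = 1$ is used. The image of any character of $(\bZ/q\bZ)^\times$ lies in the group of $\phi(q)$-th roots of unity, so every value $f(n)$ lies in $\bQ(\zeta_{\phi(q)})$, whence $K := \bQ(f(1),\dots,f(q)) \subseteq \bQ(\zeta_{\phi(q)})$. Since $\gcd(q,\phi(q)) = 1$, multiplicativity of Euler's function gives $[\bQ(\zeta_q,\zeta_{\phi(q)}) : \bQ(\zeta_{\phi(q)})] = \phi(q\phi(q))/\phi(\phi(q)) = \phi(q)$. As $\zeta_q$ generates $\bQ(\zeta_q,\zeta_{\phi(q)})$ over $\bQ(\zeta_{\phi(q)})$ and is a root of the monic polynomial $\Phi_q$ of degree $\phi(q)$ with rational coefficients, $\Phi_q$ must be the minimal polynomial of $\zeta_q$ over $\bQ(\zeta_{\phi(q)})$; in particular $\Phi_q$ is irreducible over $\bQ(\zeta_{\phi(q)})$, and a fortiori over the subfield $K$. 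So $f$ satisfies both (i) and (ii), and Theorem~\ref{thm1} says $L(1,f) = 0$ cannot hold --- contradicting $L(1,f) = 0$. Hence no non-trivial rational relation among the $L(1,\chi)$ exists, which is the assertion.

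The only real obstacle is the field-theoretic step (ii): one has to identify precisely the cyclotomic field containing all the character values and then use $(q,\phi(q)) = 1$ to see that adjoining those values cannot lower the degree of $\Phi_q$. The remaining ingredients --- algebraicity and $q$-periodicity of $f$, its non-vanishing via independence of characters, and the identity $L(1,f) = \sum_\chi c_\chi L(1,\chi)$ --- are routine.
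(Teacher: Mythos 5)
Your proposal is correct and follows essentially the same route as the paper: form $f = \sum_\chi c_\chi \chi$ from a putative rational relation, note that condition (i) holds trivially, and use $(q,\phi(q))=1$ together with the degree computation $[\bQ(\zeta_q,\zeta_{\phi(q)}):\bQ(\zeta_{\phi(q)})]=\phi(q)$ to verify condition (ii) before invoking Theorem~\ref{thm1}. Your write-up is in fact slightly more careful than the paper's (you explicitly check $f\neq 0$ via linear independence of characters, which the paper leaves implicit), but the argument is the same.
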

\begin{proof}
	Suppose we have some relation $\sum_{i=1}^{N}\alpha_i L(1,\chi_i) = 0$ where the $\alpha_i \in \bQ$. Then we can define $f = \sum_{i=1}^{N}\alpha_i \chi_i$ and get a contradiction
	to the above theorem, if our $f$ satisfies condition $(ii)$ (it clearly satisfies $(i)$). So this cannot happen. We get the corollary by an implication of condition $(ii)$.
	$\Phi_q$ is irreducible over $\bQ_{\phi(q)}$ if $[ \bQ_{\phi(q)}(e^{2\pi i /q}): \bQ_{\phi(q)}] = \phi(q)$. Computing the degree of the LHS gives
	$$\frac{\phi(q)}{\phi((\phi(q),q))} = \phi(q)$$
	Thus $(\phi(q) ,q)$ is one or two. So $f$ will satisfy $(ii)$ if $(\phi(q),q) = 1$ and then the corollary holds.
\end{proof}

\section{Preliminary Results}

We work towards a proof of {\ref{thm1}} following the argument given by Baker, Birch and Wirsing. The general idea of the
proof is to transform our sum using shifted versions of it to prove that it must be $0$ on a orthogonal set. This would imply that
$f= 0$ in the first place. To begin, let $F_q$ be the set of $f:\bZ \to \bA$ with period $q$ so that
$$L(1,f) = \sum_{n=1}^{\infty}\frac{f(n)}{n} = 0$$
and let $G_q$ be the set of functions of form
$$g(s) = \frac{1}{q}\sum_{r=1}^{q} f(r)e^{-2\pi r s /q}$$
for an $f \in F_q$. Note that \ref{0sum} implies $g(0) = 0$ for any $g \in G_q$.

\begin{lemma}{\label{log}}
	$g\in G_q$ if and only if $$\sum_{s=1}^{q-1}g(s)\log(1-e^{2\pi i s /q}) = 0$$
\end{lemma}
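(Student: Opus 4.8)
The plan is to recognize $g$ as a normalization of the discrete Fourier transform of $f$ and to rewrite $L(1,f)$ as a finite sum of logarithms through this transform, so that the vanishing of $L(1,f)$ becomes literally the displayed identity.

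First I would record that the map $f\mapsto\hat f$ sending a $q$-periodic $f:\bZ\to\bA$ to $\hat f(s)=\frac1q\sum_{r=1}^q f(r)e^{-2\pi i rs/q}$ is a bijection on the space of $\bA$-valued $q$-periodic functions, with inverse $g\mapsto\check g$ given by $\check g(n)=\sum_{s=1}^q g(s)e^{2\pi i ns/q}$; this is immediate from the orthogonality relation (the Proposition) and from the fact that roots of unity are algebraic, so the transform does not leave $\bA$. In particular $g\in G_q$ precisely when $g=\hat f$ for some $f\in F_q$, and for such $f$ Lemma \ref{0sum} gives $g(0)=\frac1q\sum_{r=1}^q f(r)=0$; conversely it suffices to treat $g$ with $g(0)=0$, since every other $g$ is the transform of a function with $\sum_{n=1}^q f(n)\ne 0$, for which $L(1,f)$ diverges and so $f\notin F_q$.

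The heart of the argument is the identity
\begin{equation*}
L(1,f)=\sum_{n=1}^{\infty}\frac{f(n)}{n}=-\sum_{s=1}^{q-1} g(s)\log\bigl(1-e^{2\pi i s/q}\bigr),
\end{equation*}
valid whenever $g=\hat f$ with $g(0)=0$. To obtain it I would substitute $f(n)=\sum_{s=1}^{q-1}g(s)e^{2\pi i ns/q}$ (the $s\equiv 0$ term drops out because $g(0)=0$), interchange the finite sum over $s$ with the series over $n$, and apply $\sum_{n=1}^{\infty}z^n/n=-\log(1-z)$ with the principal branch at $z=e^{2\pi i s/q}$.

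The main obstacle — indeed the only nonroutine point — is justifying this interchange together with the boundary evaluation of the logarithm series, since $\sum_{n}e^{2\pi i ns/q}/n$ converges only conditionally. I would handle it by summation by parts: for $s\not\equiv 0\pmod q$ the partial sums $\sum_{n=1}^N e^{2\pi i ns/q}$ are bounded by $2/\lvert 1-e^{2\pi i s/q}\rvert$ uniformly in $N$, so each inner series converges, the interchange with the finite outer sum is legitimate, and Abel's theorem identifies the sum with the radial limit of the power series $\sum_{n\ge 1}z^n/n$, namely $-\log(1-e^{2\pi i s/q})$. Combining everything yields the chain $g\in G_q\iff f=\check g\in F_q\iff L(1,f)=0\iff\sum_{s=1}^{q-1}g(s)\log(1-e^{2\pi i s/q})=0$, which is the assertion; for the ``if'' direction one simply takes the given $g$, sets $f=\check g$, and reads the equivalences in reverse.
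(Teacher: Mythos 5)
Your proof is correct and takes essentially the same route as the paper's: Fourier inversion to write $f(n)=\sum_{s}g(s)e^{2\pi i ns/q}$, followed by the logarithm series to identify $L(1,f)$ with $-\sum_{s=1}^{q-1}g(s)\log(1-e^{2\pi i s/q})$. You are in fact somewhat more careful than the paper, which interchanges the conditionally convergent series with the finite sum and evaluates the boundary values of $\log(1-z)$ without the partial-summation/Abel justification you supply.
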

\begin{proof}
	First we show that $f(r) = \sum_{s = 1}^{q}g(s)e^{2\pi i r s/q}$. Indeed, substitute $g(s)$ in and sum over $s$:
	\begin{align*}
	\sum_{s = 1}^{q}g(s)e^{2\pi i r s/q} 	&= \frac{1}{q}\sum_{s=1}^{q}\sum_{l=1}^{q}f(l) e^{2\pi i s(l-r)/q} \\
						&= \frac{1}{q}\sum_{s = 1}^{q}f(r) + \frac{1}{q}\sum_{\substack{l \ne r \\ 1\leq l \leq q}} f(l)\frac{e^{2\pi i(r-l)} - 1}{e^{2\pi i(r-l)/q} -1} \\
						&= f(r) + 0  = f(r)
    \end{align*}
	Now to prove the actual lemma expand the taylor series of the logarithm:
	$$\sum_{s=1}^{q-1}g(s)og(1-e^{2\pi i s /q}) = -\sum_{k=1}^{\infty}\sum_{s=1}^{q-1}g(s) \frac{e^{2\pi i sk/q}}{k} = -\sum_{k =1}^{\infty}\frac{f(k)}{k} = 0$$
	Where the last equality comes from the definition of $f$.
\end{proof}

Here we invoke Baker's theorem on linear forms in logarithms. It turns out we don't need the actual bound given by Baker in \cite{baker68}, we just need to show a certain
linear form in logs is zero and so the coefficients in the form must be $0$, by Baker.

\begin{lemma}{\label{aut}}
	Let $\sigma$ be an automorphism of $\bA$ and $g \in G_q$. Then $\sigma g \in G_q$.
\end{lemma}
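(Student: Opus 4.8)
The plan is to use Lemma~\ref{log} as the working criterion for membership in $G_q$: a function $g$ lies in $G_q$ precisely when $\sum_{s=1}^{q-1} g(s)\log(1-e^{2\pi i s/q}) = 0$. So given $g \in G_q$ and an automorphism $\sigma$ of $\bA$, I want to show that $\sum_{s=1}^{q-1} (\sigma g)(s)\log(1-e^{2\pi i s/q}) = 0$ as well. The natural idea is to apply $\sigma$ to the relation that $g$ already satisfies. The obstacle is that $\log(1-e^{2\pi i s/q})$ is a transcendental number, not an algebraic one, so $\sigma$ does not act on it directly; I cannot simply write $\sigma$ of the whole equation and move it inside the sum.

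First I would observe that $\sigma$ permutes the primitive $q$-th roots of unity, and more generally permutes the full set $\{e^{2\pi i s/q} : 1 \le s \le q-1\}$ of nontrivial $q$-th roots of unity. Concretely, there is an integer $t$ with $\gcd(t,q)=1$ such that $\sigma(e^{2\pi i s/q}) = e^{2\pi i st/q}$ for every $s$ (the action of $\sigma$ on $q$-th roots of unity factors through $(\bZ/q\bZ)^\times$). Then $(\sigma g)(s) = \frac{1}{q}\sum_{r=1}^q \sigma(f(r)) e^{-2\pi i r s/q}$, so $\sigma g$ is the function in $G_q$-shape attached to $\sigma f$, provided I can show $\sigma f \in F_q$; but that is not automatic and is in fact the wrong route. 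Instead I would work directly with the logarithm criterion: reindex the sum $\sum_{s=1}^{q-1} g(s)\log(1-e^{2\pi i s/q})$ by the substitution $s \mapsto st \bmod q$, where $t$ is chosen so that $s \mapsto st$ permutes $\{1,\dots,q-1\}$ (taking $t$ coprime to $q$). This gives $\sum_{s=1}^{q-1} g(st)\log(1-e^{2\pi i st/q}) = 0$. Now I observe that $g(st) = \frac1q \sum_r f(r) e^{-2\pi i rst/q}$, and I relate this to $\sigma g$: since $\sigma(\zeta_q) = \zeta_q^t$ with $\zeta_q = e^{2\pi i/q}$, we have $\log(1-\zeta_q^{st})$ playing the role that a Galois conjugate would play, and the coefficient $g(st)$ should be matched against $(\sigma g)(s)$ up to applying $\sigma^{-1}$ to the scalars.

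The cleanest execution runs as follows. Start from $0 = \sum_{s=1}^{q-1} g(s)\log(1-\zeta_q^s)$, where $\zeta_q = e^{2\pi i/q}$. Apply $\sigma^{-1}$ to the coefficients (only) and simultaneously reindex $s \mapsto st$ where $\sigma(\zeta_q) = \zeta_q^t$: I claim
\[
0 = \sigma^{-1}\!\left( \sum_{s=1}^{q-1} g(s)\log(1-\zeta_q^s) \right)^{\!\ast}
\]
is not literally what happens, so instead I argue via Baker's theorem directly. The point is: $\sigma g$ has algebraic values, and $\sum_{s=1}^{q-1}(\sigma g)(s)\log(1-\zeta_q^s)$ is a $\bQ$-linear-over-$\bA$ combination of logarithms of algebraic numbers; by Baker's theorem (invoked just before this lemma), it is either zero or transcendental, and in particular it vanishes iff a certain $\bQ$-linear relation among the $\log(1-\zeta_q^s)$ holds. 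Applying $\sigma$ to the identity $f(r) = \sum_{s=1}^q g(s)\zeta_q^{rs}$ gives $\sigma(f(r)) = \sum_{s=1}^q \sigma(g(s))\,\sigma(\zeta_q)^{rs} = \sum_{s=1}^q \sigma(g(s))\zeta_q^{rst}$; reindexing $s \mapsto s t^{-1} \bmod q$ shows $\sigma(f(r)) = \sum_{s=1}^q (\sigma g)(st^{-1}) \zeta_q^{rs}$... I will clean this bookkeeping up so that it displays $\sigma g$ as exactly the Fourier-type function attached to $\sigma f$ composed with multiplication by $t^{-1}$, and then check that $\sigma f \in F_q$: indeed $L(1,f)=0$ translates, via Lemma~\ref{log}, into the logarithm relation, to which I apply $\sigma$ after expressing $\log(1-\zeta_q^s) = \sigma^{-1}(\text{something})$—which fails transcendentally—so the honest argument is: the logarithm relation $\sum_s g(s)\log(1-\zeta_q^s)=0$ has $\bA$-coefficients, hence by Baker it is equivalent to a $\bQ$-linear dependence $\sum_s c_s \log(1-\zeta_q^s) = 0$ with each $\log(1-\zeta_q^s)$-coefficient determined by the $g(s)$; since this $\bQ$-relation is $\sigma$-stable in the sense that reindexing by $t$ preserves it, the reindexed coefficients $g(st^{-1})$ also give a valid relation, and these reindexed coefficients are, after applying $\sigma$, exactly $(\sigma g)(s)$.

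The main obstacle, then, is precisely the transcendence bookkeeping: $\sigma$ acts on the algebraic coefficients but not on the logarithms, so I must use Baker's theorem to convert the single vanishing linear form with algebraic coefficients into a set of rational linear dependences among the $\log(1-\zeta_q^s)$ themselves (which are Galois-stable objects, permuted by $s \mapsto st$), and only then reindex and reassemble. Once that conversion is in hand, the computation $\sigma(f(r)) = \sum_s (\sigma g)(s) e^{2\pi i rs/q}$ combined with the reindexed relation shows $\sum_{s=1}^{q-1}(\sigma g)(s)\log(1-\zeta_q^s)=0$, and Lemma~\ref{log} immediately yields $\sigma g \in G_q$, which is \req
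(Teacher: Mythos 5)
You have correctly isolated both the obstacle (an automorphism of $\bA$ cannot be pushed onto the transcendental numbers $\log(1-e^{2\pi i s/q})$) and the tool that resolves it (Baker's theorem, which converts the vanishing of a linear form with algebraic coefficients into rational data that $\sigma$ respects). But the argument you actually assemble at the end is not correct, and the correct argument is only gestured at, never executed. The clean mechanism, which is the one the paper uses, is: choose a maximal $\bQ$-linearly independent subset $\log\alpha_1,\dots,\log\alpha_t$ of $\{\log(1-e^{2\pi i s/q})\}_{s=1}^{q-1}$ and write $\log(1-e^{2\pi i s/q})=\sum_{r=1}^{t}a_{rs}\log\alpha_r$ with $a_{rs}\in\bQ$. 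Substituting into $\sum_{s}g(s)\log(1-e^{2\pi i s/q})=0$ gives $\sum_{r}\beta_r\log\alpha_r=0$ with $\beta_r=\sum_{s}g(s)a_{rs}$ algebraic, and Baker's theorem ($\bQ$-linearly independent logarithms of algebraic numbers are linearly independent over $\bA$) forces every $\beta_r=0$. These are now purely algebraic identities in the $g(s)$ with \emph{rational} coefficients $a_{rs}$, so applying $\sigma$ termwise gives $\sum_{s}\sigma(g(s))a_{rs}=0$ for each $r$, and reassembling yields $\sum_{s}\sigma(g(s))\log(1-e^{2\pi i s/q})=0$; Lemma~\ref{log} then finishes. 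Your write-up never performs this decomposition into a rational basis, and that decomposition is the only place where $\sigma$ can legitimately act.

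The concrete errors in your final assembly are these. First, the reindexing $s\mapsto st$ with $\sigma(\zeta_q)=\zeta_q^{t}$ does not belong in this lemma: here $\sigma g$ means the pointwise image, $(\sigma g)(s)=\sigma(g(s))$, and your closing sentence identifies $(\sigma g)(s)$ with $\sigma(g(st^{-1}))$, which is false in general. (The permutation of the roots of unity is precisely what drives the \emph{next} lemma, Lemma~\ref{hn}, where $f'(n)=\sigma f(hn)$ is considered; importing it here conflates the two statements.) Second, the step ``this $\bQ$-relation is $\sigma$-stable in the sense that reindexing by $t$ preserves it'' is asserted rather than proved: a rational relation $\sum_{s}c_s\log(1-\zeta_q^{s})=0$ is not obviously preserved under $s\mapsto st$, since $\sigma$ does not act on the logarithms (one would have to pass through multiplicative relations among the $1-\zeta_q^{s}$ and track branches of the logarithm), and in any case this stability is not needed. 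Strip out the reindexing, carry out the basis decomposition, and apply $\sigma$ to the resulting algebraic identities; then the proof closes.
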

\begin{proof}
	Consider the set $\log(1 - e^{2\pi i s /q})$ for $s = 1, \cdots, q-1$ over $\bQ$. Let $\log(\alpha_i)$ be a linearly independent subset of maximum cardinality $i = 1, \cdots, t$ so
	they form a basis and:
	$$\log(1-e^{2\pi i s/q}) = \sum_{i=1}^{t}a_{rs}\log(\alpha_r)$$
	for some $a_{rs} \in \bQ$.
	Let $\beta_r = \sum_{s = 1}^{q-1}g(s) a_{rs}$.\\

	Then $\sum_{s=1}^{q-1}g(s)\log(1-e^{2\pi i s/q}) = 0$ implies that $$\sum_{i =1}^{t}\beta_i \log(\alpha_i) = \sum_{s=1}^{q-1}g(s)\sum_{r=1}^{t} a_{rs}\log(\alpha_r) = 0$$ since
	$$\sum_{s=1}^{q-1}g(s)\sum_{r=1}^{t} a_{rs}\log(\alpha_r) =\sum_{s=1}^{q-1}g(s)\log(1-e^{2\pi i s/q}).$$

	Then we have a linear form in logs with a sum of $0$,
	$$\sum_{i =1}^{t}\beta_i \log(\alpha_i) = 0.$$
	Since all of the $\log(\alpha_i)$ are linearly independent over $\bQ$ it follows that each $\beta_i$ is $0$. So $\sum_{s=1}^{q-1} g(s)a_{rs} = 0$. Now for any automorphism $\sigma$
	note that
	$$\sum_{s=1}^{q-1}\sigma(g(s)) \log(1- e^{2\pi i s/q}) = \sum_{r=1}^{t}\sigma(\sum_{s=1}^{q-1} g(s) a_{rs}) \log(\alpha_r) = 0$$
	By linearity of $\sigma$. The last sum is $0$ since each term in the sum over $r$ is $0$.
	By lemma $\ref{log}$, $\sigma g \in G_q$.
\end{proof}

\begin{lemma}{\label{hn}}
	Given an automorphism of $\bA$, $\sigma$, define $h$ to be the unique integer modulo $q$ by $\sigma^{-1}e^{2\pi i /q} = e^{2\pi i h /q}$. Define $f'(n) = \sigma f(hn)$
	for all $n$, then $f' \in F_q$.
\end{lemma}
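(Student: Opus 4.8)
The plan is to route everything through the Fourier-type correspondence $f\leftrightarrow g$ between $F_q$ and $G_q$ and to invoke Lemma~\ref{aut}. Fix $f\in F_q$ and let $g\in G_q$ be the associated function; recall from the proof of Lemma~\ref{log} the inversion identity $f(r)=\sum_{s=1}^{q}g(s)e^{2\pi i rs/q}$. The first step is to pin down how $\sigma$ acts on $q$-th roots of unity. Writing $\zeta=e^{2\pi i/q}$, the hypothesis $\sigma^{-1}\zeta=\zeta^{h}$ forces $h$ to be a unit mod $q$ (an automorphism sends a primitive $q$-th root of unity to a primitive $q$-th root of unity), and applying $\sigma$ gives $\zeta=(\sigma\zeta)^{h}$, so $\sigma\zeta=\zeta^{h'}$ with $h'h\equiv1\pmod q$. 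Hence $\sigma(\zeta^{hm})=\zeta^{h'hm}=\zeta^{m}$ for every integer $m$.

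The key computation is then to apply $\sigma$ to the inversion identity evaluated at $hn$:
\[
f'(n)=\sigma f(hn)=\sigma\!\left(\sum_{s=1}^{q}g(s)e^{2\pi i hns/q}\right)=\sum_{s=1}^{q}\sigma\bigl(g(s)\bigr)\,\sigma(\zeta^{hns})=\sum_{s=1}^{q}\sigma\bigl(g(s)\bigr)\,e^{2\pi i ns/q},
\]
where I use that $\sigma$ is additive and multiplicative on the finite sum and then the root-of-unity identity above with $m=ns$. Now Lemma~\ref{aut} tells us $\sigma g\in G_q$, so by the definition of $G_q$ there is some $\tilde f\in F_q$ with $\sigma(g(s))=\tfrac1q\sum_{r=1}^{q}\tilde f(r)e^{-2\pi i rs/q}$, and the inversion identity applied to $\sigma g$ gives $\tilde f(n)=\sum_{s=1}^{q}\sigma(g(s))e^{2\pi i ns/q}$. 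Comparing with the display shows $f'=\tilde f$, so $f'\in F_q$; in particular $f'$ is $q$-periodic with $L(1,f')=0$. (Periodicity of $f'$ is in any case immediate, since $h(n+q)\equiv hn\pmod q$ and $f$ has period $q$.)

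I do not expect a genuine obstacle here: the argument is essentially an unwinding of the correspondence, with Lemma~\ref{aut} doing the substantive work. The only point that needs care is the bookkeeping for the Galois action on roots of unity---specifically, that the shift $h$ attached to $\sigma^{-1}$ is exactly what cancels the action of $\sigma$ on $e^{2\pi i hns/q}$, and that $h$ being a unit mod $q$ is what makes $n\mapsto hn$ well defined on $\bZ/q\bZ$ in the first place.
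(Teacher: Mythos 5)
Your proposal is correct and follows essentially the same route as the paper: apply $\sigma$ to the inversion identity $f(hn)=\sum_s g(s)e^{2\pi i hns/q}$, use the definition of $h$ to see that $\sigma$ carries $e^{2\pi i hns/q}$ to $e^{2\pi i ns/q}$, and conclude via Lemma~\ref{aut} that $\sigma g\in G_q$, hence $f'\in F_q$. Your extra bookkeeping on the Galois action on roots of unity only makes explicit what the paper leaves implicit.
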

\begin{proof}
	Since $f(r)$ and $g(r)$ are related by the formula in $\ref{log}$ then there is some $g(s) \in G_q$ so that
	\begin{align*}
	\sigma f(hn) &= \sigma \left( \sum_{s=1}^{q-1}g(s) e^{2\pi i hn s/q} \right) \\
			&= \sum_{s=1}^{q-1}\sigma (g(s)) \sigma(e^{2\pi i nhs/q}) \\
			&= \sum_{s=1}^{q-1}\sigma g(s) e^{2\pi i n s/q}
	\end{align*}

	Thus $\sigma f(hn) \in F_q$ if and only if $\sigma g(s) \in G_q$ by the way $G_q$ and $F_q$ were constructed. Also by $\ref{aut}$,
	$\sigma q(s) \in G_q$ and so $\sigma f(hn) \in F_q$.
\end{proof}

\section{The Theorem of Baker, Birch and Wirsing}
\begin{proof}
We are ready to prove the main result (Theorem {\ref{thm1}}). The goal is to show that if the properties $(i)$ and $(ii)$ are forced on $f$, $f = 0$.
Recall that $(ii)$ state that the minimal polynomial of $e^{2\pi i /q}$ is irreducible over $\bQ$ adjoined with the values that $f$ takes on.
Accordingly, we may choose some automorphism $\sigma$ that fixes $f(n)$ for all $n$ and takes $e^{2\pi i h/q}$ to $e^{2\pi i /q}$ for any $h$ with $(h,q) = 1$.
Then $\sigma f(hn) = f(hn)$ is in $F_q$.

Consider now the sum given by

	$$\sum_{\substack{h =1 \\ (h,q) =1}}^q\sum_{n=1}^{\infty} \frac{f(hn)}{n} = 0.$$
It is $0$ since each term summing over $h$ is $0$. Interchange the sums and group terms with the same denominator to get

	$$\sum_{n=1}^\infty \frac{\sum_{\substack{h =1 \\ (h,q) =1}}^q f(hn)}{n} = 0.$$

	The sum $\sum_{\substack{h =1 \\ (h,q) =1}}^q f(hn)$ is $0$ if $1 < (n,q) < q$ by $(i)$.
	If $(n,q) = q$ then the sum is $\phi(q)f(0)$ as each term is $f(0)$. Finally, if $(n,q) = 1$, then $hn$ takes on all values, $p$, modulo $q$, with $(p,q) = 1$ since we always have
	$(h,q) = 1$. Thus the sum in question is $\sum_{s = 0}^{q} f(s) - f(0) = -f(0)$ by $\ref{0sum}$.

	The idea now is to sum over the individual periods of $f$. Over any period of length $q$,say,  $(n+1, \cdots n+q)$ we have one multiple of $q$ and $\phi(q)$ integers coprime
	to $q$.

    \begin{align*}
	0 &= \sum_{n=1}^{\infty} \frac{\sum_{\substack{h =1 \\ (h,q) =1}}^q f(hn)}{n} \\
	    &= \sum_{m = 0}^{\infty} \sum_{s=1}^{q} \frac{\sum_{\substack{h =1 \\ (h,q) =1}}^q f(h(mq+s))}{mq+s} \\
		&= f(0)\sum_{m = 0}^{\infty} \left( \frac{\phi(q)}{q+mq} - \sum_{\substack{1\leq s\leq q \\ (s,q) = 1}} \frac{1}{s+mq}\right)
	\end{align*}
	Now each term in the sum over $m$ is less than than $0$, $\frac{\phi(q))}{q+mq} - \sum_{\substack{1\leq s\leq q \\ (s,q) = 1}} \frac{1}{s+mq} < 0$. This means that $f(0) = 0$.
	We want to show that the weighted sum $\sum_{n =1}^{q} \chi(n)f(n) = 0$ for arbitrary $\chi \in X(q)$. By orthogonality, that would mean $f(n) = 0$ for all $n$, as each sum can be
	viewed as a component of the multiplication of the orthogonal matrix $M = \chi_{i}(j)$ with the vector $[f(i)]_{(i,q) = 1}$ for some ordering of the $\phi(q)$ many $\chi \in X(q)$.
	By invertability of the matrix the result follows.

	If $\chi \in X(q)$ is principal then the result is true by the lemma $\ref{0sum}$. Suppose $\chi$ isn't principal an define
	$$b(n) = \sum_{h = 1}^{q}\chi(h) f(hn)$$
	so that
	$$b(1)\overline{\chi(n)} = b(n).$$
	To see this note that if $hn = m$ then $\chi(m)\overline{\chi(n)} = \chi(h)$ by complete multiplicativity. So then $\chi(m)\overline{\chi(n)}f(m) = \chi(h)f(hn)$ and we can sum over
	all the numbers coprime to $q$ from $1, \cdots, q$ (or all numbers from $1,\cdots, q$ since $f(m)$ is $0$ when $(m,q) > 1$)  which yields
	$$b(1)\overline{\chi(n)} = \sum_{m = 1}^{q}\chi(m)\overline{\chi(n)}f(m) = \sum_{h=1}^{q} \chi(h)f(hn) = b(n).$$
	But now

	$$b(1)L(1, \overline{\chi}) = \sum_{n=1}^\infty \frac{b(n)}{n} = \sum_{h =1}^{q}\chi(h)\sum_{n=1}^\infty\frac{f(hn)}{n} = 0$$
	and we have seen that $L(1,\overline{\chi})$ is not $0$ so $b(1) = \sum_{h = 1}^{q}\chi(h)f(h) = 0$. Since $\chi$ was arbitrary, $f = 0$.
	Which was what was wanted.

\end{proof}
\section{Complete Solution to Chowla's Problem}

We briefly review the other theorems in \cite{baker73} and more recent work done by Chatterjee, Murty, and Pathak \cite{murty18}.
In their paper on the problem Baker et al. were able to drop conditions $(i)$ and $(ii)$ and classify the solutions to $\ref{1}$ under a much weaker condition.
Namely, that $f$ must be odd.

\begin{theorem}{\label{2}}
	Let $q \geq 3$ be a natural number. Suppose $f$ is odd, algebraic valued and periodic modulo $q$ so that $\ref{1}$ holds. Then $f$ is contained in
	the span (over $\bA$) of the following depending on if $q$ is odd or even, respectively:
	\begin{equation}{\label{span}}
		f_l(n) = \left(\frac{sin(n\pi/q)}{sin(\pi/q)}\right)^l \hspace{5mm} l = 3, 5 \cdots, q-2
	\end{equation}
	\begin{equation}
		f_l(n) = \frac{cos(n\pi/q)}{cos(\pi/q)}\left(\frac{sin(n\pi/q)}{sin(\pi /q)}\right)^l \hspace{5mm} l = 3, 5, \cdot,s q-3
	\end{equation}
\end{theorem}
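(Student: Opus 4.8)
The plan is to turn the vanishing of $L(1,f)$ into a single explicit linear condition on $f$ --- a cotangent sum --- and then match an explicit family of solutions against a dimension count. \emph{Step 1 (a cotangent criterion).} By Lemma \ref{log} and the identity established in its proof, for odd $f$ with finite Fourier transform $g$ one has $L(1,f)=-\sum_{s=1}^{q-1}g(s)\log(1-e^{2\pi is/q})$, so \eqref{1} holds iff $\sum_{s}g(s)\log(1-e^{2\pi is/q})=0$. From $f(r)=\sum_s g(s)e^{2\pi irs/q}$ and the oddness of $f$, $g$ is odd: $g(q-s)=-g(s)$. Writing $1-e^{i\theta}=2\sin(\theta/2)e^{i(\theta-\pi)/2}$ for $0<\theta<2\pi$, so that $\log(1-e^{2\pi is/q})=\log(2\sin(\pi s/q))+\tfrac{i\pi(2s-q)}{2q}$, and pairing $s$ with $q-s$: the real part is symmetric and is annihilated by the antisymmetry of $g$, while the imaginary part is antisymmetric and survives, leaving $\sum_{s=1}^{q-1}s\,g(s)=0$. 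Substituting $\tfrac1q\sum_{s=1}^{q-1}s\,e^{-2\pi irs/q}=(e^{-2\pi ir/q}-1)^{-1}$, using $(e^{-i\theta}-1)^{-1}=-\tfrac12+\tfrac{i}{2}\cot(\theta/2)$, and $\sum_r f(r)=0$ (Lemma \ref{0sum}, automatic for odd $f$), this becomes
$$\sum_{r=1}^{q-1} f(r)\cot\!\left(\frac{\pi r}{q}\right)=0 .$$
Thus, for odd $f$, \eqref{1} holds if and only if this cotangent sum vanishes. (Equivalently, Gauss's digamma theorem gives $L(1,f)=\tfrac{\pi}{2q}\sum_r f(r)\cot(\pi r/q)$ outright.)

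\emph{Step 2 (dimension count and independence).} The space $V$ of odd, $\bA$-valued, $q$-periodic functions is free on $f(1),\dots,f(\lfloor(q-1)/2\rfloor)$, so $\dim_{\bA}V=\lfloor(q-1)/2\rfloor$; the functional of Step 1 is not identically zero on $V$ (it does not annihilate the $f$ with $f(1)=-f(q-1)=1$), so the solution space $V_0$ has dimension $\lfloor(q-1)/2\rfloor-1$, which one checks equals the number of functions listed in \eqref{span} and its even-$q$ counterpart: $(q-3)/2$ for odd $q$ and $(q-4)/2$ for even $q$. For independence, a relation $\sum_l c_l f_l\equiv0$ forces $\sum_l c_l y_n^{\,l}=0$ for every $n$, where $y_n=\sin(n\pi/q)/\sin(\pi/q)$; factoring out $y^3$ and grouping powers of $y^2$, this is a polynomial of degree $\lfloor(q-1)/2\rfloor-2$ vanishing at the $\lfloor(q-1)/2\rfloor$ distinct positive reals $y_n^2$, hence all $c_l=0$. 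So it suffices to show each $f_l\in V_0$, and $\mathrm{span}_{\bA}(f_l)=V_0$ follows from the dimension count.

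\emph{Step 3 (the listed functions solve \eqref{1})} --- this is the crux. Each $f_l$ is $\bA$-valued since $e^{i\pi/q}\in\bA$, and $q$-periodic and odd by the behaviour of $\sin$ and $\cos$ under $n\mapsto n+q$ and $n\mapsto -n$ together with the oddness of $l$. For the cotangent criterion, write $f_l(r)\cot(\pi r/q)$ as $\sin^{\,l-1}(\pi r/q)\cos(\pi r/q)$ (with one extra factor $\cos(\pi r/q)$ in the even-$q$ case), expand the power of sine into an $\bA$-linear combination of exponentials $e^{ij\pi r/q}$, and sum over $r=1,\dots,q-1$: the expression collapses to finite geometric sums $\sum_{r=1}^{q-1}e^{ij\pi r/q}$, whose values are elementary. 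The role of the bound $l\le q-2$ (resp. $l\le q-3$) is precisely to keep every frequency $j$ that occurs strictly inside the range where these geometric sums take their generic value, so that the total vanishes.

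I expect Step 3 to be the main obstacle: carrying out this trigonometric computation uniformly in $l$, in $q$, and in the parity of $q$, keeping careful track of which frequencies appear and checking that the bound on $l$ prevents an unwanted frequency from folding back modulo $q$ (or $2q$) and spoiling a cancellation. Everything else --- the reduction in Step 1 and the dimension/Vandermonde bookkeeping in Step 2 --- is routine once the cotangent criterion is in hand.
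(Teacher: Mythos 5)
First, a caveat: the paper does not prove Theorem \ref{2} at all --- it only quotes the statement from Baker--Birch--Wirsing --- so there is no in-paper argument to compare yours against, and your proposal has to stand on its own. On those terms, your Steps 1 and 2 are sound and are the standard route: for odd $q$-periodic $f$ one does get $L(1,f)=\frac{\pi}{2q}\sum_{r=1}^{q-1}f(r)\cot(\pi r/q)$, the solution space is the kernel of a single nonzero $\bA$-linear functional, its dimension $\lfloor (q-1)/2\rfloor-1$ matches the number of listed $f_l$, and the Vandermonde argument for independence works.

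The genuine gap is in Step 3, and it is fatal as written. For odd $q$ the stated $f_l$ satisfy $f_l(n+q)=(-1)^l f_l(n)=-f_l(n)$: the very oddness of $l$ that you invoke to make $f_l$ an odd function makes it \emph{anti}-periodic mod $q$ rather than $q$-periodic, so $f_l\notin V$ and the cotangent criterion of Step 1 does not apply to it. Your own computation should have flagged this: $\sum_{r=1}^{q-1}\cos(k\pi r/q)=0$ for \emph{every} odd $k$ (an odd $k$ is never divisible by $2q$), so the bound $l\le q-2$ plays no role and your calculation would equally place $f_1$ in the kernel, yielding $\lfloor(q-1)/2\rfloor$ independent ``solutions'' in a space you just showed has dimension $\lfloor(q-1)/2\rfloor-1$. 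For even $q$ the stated $f_l$ are genuinely $q$-periodic and odd, but they fail the criterion outright: for $q=6$, $l=3$ one finds $f_3=(1,3,0,-3,-1,0)$ on $n=1,\dots,6$ and $\sum_r f_3(r)\cot(\pi r/6)=4\sqrt3\ne0$, whereas the kernel is spanned by $(1,-3,0,3,-1,0)$. The statement as transcribed in this survey is missing a factor $(-1)^n$ (equivalently, the trigonometric formulas must be read with $n$ first reduced into $(-q/2,q/2)$); with that correction the relevant exponential sums become $\sum_{r=1}^{q-1}(-1)^r\cos(k\pi r/q)=-1$ for $q\nmid k$, the total collapses to $-\sin^{l-1}(0)\cos(0)$ times a nonzero constant, and the hypothesis $l\ge 3$ is exactly what makes this vanish (it fails for $l=1$, restoring consistency with your dimension count). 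You need to repair the statement and redo Step 3 along these lines; as proposed it does not close.
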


And moreover, if we insist that $L(1,f) = 0$ for a $q$-periodic function $f$ and condition $(i)$ holds then it was shown that $f$ has to be odd in the same paper. About a year ago, Chatterjee, Murty and Pathak were able to generalize the above result in their paper with the following theorem.

\begin{theorem}[Chatterjee, Murty, Pathak\cite{murty18}]
    Let $f$ be algebraically valued with period $q$. Let
    $$f_o = \frac{f(x) - f(-x)}{2} \hspace{1cm} f_e = \frac{f(x) + f(-x)}{2}$$
    be the odd and even parts of $f$, respectively. Then
    $$L(1,f) = 0 \iff L(1,f_o) = 0 \hspace{2mm}{\rm and}\hspace{2mm} L(1,f_e) = 0$$
\end{theorem}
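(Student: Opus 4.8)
The plan is to reduce the theorem to a statement about a single function whose $L$-value vanishes, so that the earlier machinery applies. The implication $(\Leftarrow)$ is immediate: by linearity of the Dirichlet series, $L(1,f) = L(1,f_o) + L(1,f_e)$, since $f = f_o + f_e$ pointwise. Hence if both $L(1,f_o) = 0$ and $L(1,f_e) = 0$ then $L(1,f) = 0$. The content is entirely in the forward direction $(\Rightarrow)$, where we must show that the vanishing of $L(1,f)$ forces the odd and even parts to vanish separately.

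For $(\Rightarrow)$, the key observation is that the even and odd parts of a periodic function have different behaviour under the substitution $n \mapsto q - n$, and this parity can be exploited analytically. I would start from the classical formula expressing $L(1,g)$ for a $q$-periodic $g$ in terms of logarithms: writing $g$ via its finite Fourier expansion as in Lemma~\ref{log}, one obtains (after imposing $\sum_{n=1}^q g(n) = 0$, which holds by Lemma~\ref{0sum} applied to $f$, hence to $f_o$ and $f_e$ since that sum splits) an identity of the shape $L(1,g) = -\sum_{s=1}^{q-1} \hat g(s)\log(1 - e^{2\pi i s/q})$ for suitable Fourier coefficients $\hat g(s)$. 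Splitting $f = f_o + f_e$ and using that $\widehat{f_o}$ is odd and $\widehat{f_e}$ is even in $s \bmod q$, one finds that $L(1,f_o)$ is built from the \emph{imaginary} parts $\log|1 - e^{2\pi i s/q}|$ paired antisymmetrically — actually from the arguments, giving a rational multiple of $\pi$ (equivalently a combination of cotangent values) — while $L(1,f_e)$ is built from the \emph{real} parts $\log|2\sin(\pi s/q)|$ paired symmetrically. Concretely, $L(1,f_e) \in \sum_s \bA \cdot \log(\text{real algebraic})$ and $L(1,f_o) \in \pi \cdot \bA$ (this is the standard dichotomy: $L(1,\chi)$ for even $\chi$ involves logarithms, for odd $\chi$ is algebraic times $\pi$). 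Since $L(1,f) = L(1,f_o) + L(1,f_e) = 0$ with one summand an algebraic multiple of $\pi$ and the other an algebraic combination of logarithms of algebraic numbers, Baker's theorem on linear forms in logarithms (invoked as in the excerpt, treating $\pi = \frac{1}{i}\log(-1)$ as one of the logarithms) forces each summand to vanish: the two summands live in linearly independent one-dimensional-over-the-logarithm-lattice pieces. Hence $L(1,f_o) = 0$ and $L(1,f_e) = 0$ separately.

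To make the Baker step clean I would assemble all relevant logarithms — $\log(1 - \zeta^s)$ for $s = 1,\dots,q-1$ together with $\log(-1)$ — choose a $\bQ$-basis $\log\alpha_1,\dots,\log\alpha_t$ of their $\bQ$-span (as in the proof of Lemma~\ref{aut}), and write both $L(1,f_o)/\pi$-type and $L(1,f_e)$-type contributions in these coordinates; the equation $L(1,f) = 0$ then reads as a single vanishing $\bA$-linear form $\sum_i \gamma_i \log\alpha_i = 0$, so every $\gamma_i = 0$ by Baker, and regrouping the coordinates recovers $L(1,f_o) = 0$ and $L(1,f_e) = 0$. The main obstacle is bookkeeping: one must verify that the $\log(1-\zeta^s)$ genuinely separate into a ``real'' block and an ``argument ($\pi i$)'' block after the odd/even Fourier split, i.e. that no cancellation mixes the two parity sectors before Baker is applied. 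This is where the parity of the Fourier coefficients $\widehat{f_o}$, $\widehat{f_e}$ under $s \mapsto -s$ does the real work, via the conjugation symmetry $\overline{\log(1-\zeta^s)} = \log(1-\zeta^{q-s})$. Once that separation is in place, the conclusion is formal.
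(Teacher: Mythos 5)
Your proposal is correct, and it follows essentially the same route as the cited source (the paper itself states this theorem without reproducing a proof, so there is no internal proof to compare against): split $L(1,f)$ via the finite Fourier expansion into $L(1,f_e)$, an algebraic linear combination of the real logarithms $\log(2\sin(\pi s/q))$, and $L(1,f_o)$, an algebraic multiple of $i\pi = \log(-1)$, and then apply the homogeneous form of Baker's theorem exactly as in Lemma~\ref{aut}. The one point you flag as the ``main obstacle'' is in fact easy: $\log(-1)$ is purely imaginary and the $\log(2\sin(\pi s/q))$ are real, so $\log(-1)$ can always be taken as a member of the $\bQ$-basis and its coefficient in the vanishing form is precisely $L(1,f_o)/(i\pi)$, which Baker then forces to be $0$; the convergence bookkeeping (that $\sum_{n=1}^q f_o(n)=0$ automatically, hence $\sum_{n=1}^q f_e(n)=0$ as well) is also handled correctly.
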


Chatterjee et al. then provide a complete characterization of the even $f$ so that $L(1,f)=0$.

\begin{theorem}
    Consider the set of all $f$, where $f$ is an even algebraically valued $q$-periodic function so that $L(1,f) = 0$. This set is exactly the space spanned (over the algebraics) by
    $$\hat{F_{d,c}}$$
    Where $d |q$, $1 \leq c \leq d-1$ and $\hat{F_{d,c}}$ is the Fourier transform of
    $F_{d,c} = F^{(1)}_{d,c} - F^{(2)_{d,c}}$. Here $F^{(1)}_{d,c}$ and $F^{(2)}_{d,c}$
    are defined by
    \begin{equation*}
        F^{(1)}_{d,c}(n) = \begin{cases}
                                   1 & \text{if $n\equiv c$ modulo $q$} \\
                                   0 & \text{otherwise}
                            \end{cases}
    \end{equation*}
    \begin{equation*}
        F^{(2)}_{d,c}(n) = \begin{cases}
                                   1 & \text{if $n\equiv \frac{qc}{d}$ modulo $q$} \\
                                   0 & \text{otherwise}
                            \end{cases}
    \end{equation*}
\end{theorem}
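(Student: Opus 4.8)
The plan is to convert the vanishing of $L(1,f)$ for even $f$ into a statement about multiplicative relations among the real cyclotomic numbers $|1-e^{2\pi i s/q}|=2\sin(\pi s/q)$, and then to recognise those relations. First, write $g$ for the finite Fourier transform of $f$, so that $f(r)=\sum_{s=1}^{q}g(s)e^{2\pi i rs/q}$ and, by Lemma~\ref{log}, $L(1,f)=0$ is equivalent to $\sum_{s=1}^{q-1}g(s)\log\bigl(1-e^{2\pi i s/q}\bigr)=0$ (with, necessarily, $g(0)=0$ by Lemma~\ref{0sum}). Since $f$ is even, so is $g$, i.e.\ $g(s)=g(q-s)$; averaging the $s$ and $q-s$ terms and using $(1-e^{2\pi i s/q})(1-e^{-2\pi i s/q})=|1-e^{2\pi i s/q}|^{2}$ turns the condition into the real linear form
\begin{equation*}
\sum_{s=1}^{q-1}g(s)\,\log\bigl(2\sin(\pi s/q)\bigr)=0,
\end{equation*}
whose arguments are logarithms of positive algebraic numbers. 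So the set to be described is the image, under Fourier transform, of the even $g$ (with $g(0)=0$) that annihilate the vector $\bigl(\log 2\sin(\pi s/q)\bigr)_{s}$.

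Second, I would run the Galois-descent argument of Lemma~\ref{aut} together with Baker's theorem on linear forms in logarithms \cite{baker68}: picking a maximal $\bQ$-linearly independent subset of the numbers $\log 2\sin(\pi s/q)$ and expressing the remaining ones over it with rational coefficients, Baker forces every $\bA$-linear relation among these logarithms to be already a $\bQ$-linear relation. Hence the solution space is the $\bA$-span of its $\bQ$-rational points, and after clearing denominators a rational relation $\sum_{s}m_{s}\log 2\sin(\pi s/q)=0$, $m_s\in\bZ$, is exactly a \emph{multiplicative identity} $\prod_{s=1}^{q-1}\bigl(2\sin(\pi s/q)\bigr)^{m_{s}}=1$. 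The problem therefore reduces to determining the lattice of all integral multiplicative relations among the numbers $2\sin(\pi s/q)$, $1\le s\le q-1$.

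Third, I would obtain the asserted spanning functions from the classical distribution relations: for $d\mid q$ and $1\le c\le d-1$,
\begin{equation*}
\prod_{j=0}^{q/d-1}\bigl(1-e^{2\pi i (c+jd)/q}\bigr)=1-e^{2\pi i c/d},
\end{equation*}
so that, taking absolute values, $\sum_{s\equiv c\,(d)}\log 2\sin(\pi s/q)=\log 2\sin(\pi c/d)$. As the right-hand side is the single term with $s=qc/d$, this says precisely that the indicator combination $F_{d,c}=F^{(1)}_{d,c}-F^{(2)}_{d,c}$ annihilates $\bigl(\log 2\sin(\pi s/q)\bigr)_{s}$; by Lemma~\ref{log} it lies in $G_q$, whence $L(1,\widehat{F_{d,c}})=0$. (One passes to the even/cosine Fourier transform, or symmetrises $c\leftrightarrow d-c$, to make $\widehat{F_{d,c}}$ genuinely even.) Thus the asserted span is contained in the solution set, and only the reverse inclusion remains.

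The reverse inclusion — that no further relations exist — is the crux, and is where something beyond transcendence theory is required. It says exactly that the distribution-relation vectors $F_{d,c}$ span the whole relation lattice of the $2\sin(\pi s/q)$, equivalently that the $\bQ$-span of the logarithms $\log 2\sin(\pi s/q)$ attains the maximal dimension compatible with those relations. I would establish this either by quoting the known structure of the (even) universal ordinary distribution — Kubert's theorem, that modulo torsion the distribution relations are a defining set — or by invoking the non-vanishing of the cyclotomic-unit regulator, a consequence of the analytic class number formula for $\bQ(e^{2\pi i/q})^{+}$, to compute the rank of that span; since we have tensored with $\bA$, the torsion drops out and the span is exact. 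Carrying out the accompanying bookkeeping — counting the independent $F_{d,c}$, tracking the $s\mapsto q-s$ symmetry, and handling the degenerate cases $d=q$ and $d$ a prime power — is the step I expect to be the main obstacle. Baker's theorem alone does not suffice here: it reduces $\bA$-relations to $\bZ$-relations but says nothing about which integer relations actually occur.
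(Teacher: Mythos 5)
The paper offers no proof of this statement at all: it is quoted from Chatterjee, Murty and Pathak \cite{murty18} as part of a survey of the complete solution, so there is no in-paper argument to compare yours against line by line. Measured against the actual proof in the literature, your outline reconstructs the right strategy: reduce $L(1,f)=0$ for even $f$ to the vanishing of $\sum_{s}g(s)\log\bigl(2\sin(\pi s/q)\bigr)$ via Lemma~\ref{log} and the $s\leftrightarrow q-s$ symmetry, use Baker (exactly as in Lemma~\ref{aut}) to descend from $\bA$-relations to $\bQ$-relations, i.e.\ to integral multiplicative relations among the $2\sin(\pi s/q)$, and then identify those relations as the distribution relations. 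You also correctly read through the statement's apparent typo: for $F_{d,c}$ to encode the distribution relation, $F^{(1)}_{d,c}$ must be the indicator of $n\equiv c \pmod d$ (summed over the $q/d$ residues mod $q$ above $c$), not of a single class mod $q$.

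The gap is the one you yourself flag, and it is not a small one: everything up to that point is formal, and the entire content of the theorem is the reverse inclusion, i.e.\ that the distribution relations \emph{generate} the full relation lattice of $\{2\sin(\pi s/q)\}$. Writing ``quote Kubert's theorem or invoke the non-vanishing of the cyclotomic-unit regulator'' is naming the right tools but not using them; in particular you still owe the dimension count that makes the two spaces coincide -- the $\bQ$-span of $\{\log 2\sin(\pi s/q)\}_{1\le s\le q-1}$ has dimension $1+\sum_{d\mid q,\,d>2}\phi(d)/2$ by the Franz/Ramachandra independence of cyclotomic units (which ultimately rests on $L(1,\chi)\neq 0$ for even non-principal $\chi$, available from Section~1), and one must check that the span of the $F_{d,c}$ cuts out exactly the complementary codimension, including the degenerate cases $d\le 2$, $(c,d)>1$, and the double-counting under $c\mapsto d-c$. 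Until that count is carried out, what you have is a correct program rather than a proof.
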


Together with {\ref{2}}, this gives a complete description of the periodic functions with
$L(1,f) = 0$, solving Chowla's problem in full generality.

\section{Acknowledgments}

The author would like to thank Cameron Stewart for suggesting the topic and relevant results. We would also like to thank him
for teaching a course on Linear forms in Logarithms and exposing us to Baker's theorem. Finally, we would like to thank the audience to which this summary was presented, for pointing out
several clarifications.

\newpage

\printbibliography

\end{document}